\newtheorem{theorem}{Theorem}
\newtheorem{theo}{Theorem}
\newtheorem{lem}{Lemma}
\theoremstyle{definition}
\theoremstyle{remark}
\newcommand{\D}{\mathbb{D}}
\newcommand{\R}{\mathbb{R}}
\newcommand{\T}{\mathbb{T}}
\newcommand{\ud}{\mathrm {d}}
\newcommand{\Hp}{\mathcal{H}^p}
\newcommand{\dist}{\operatorname{dist}}
\newcommand{\diam}{\operatorname{diam}}
\definecolor{blau}{rgb}{0.1,0.0,0.9}
\definecolor{violet}{rgb}{0.54, 0.17, 0.89}
\newcommand{\blue}{\color{blau}}
\newcommand{\kom}[1]{}
\renewcommand{\kom}[1]{{\bf \blue /#1/}}
\newcounter{komcounter}
\numberwithin{komcounter}{section}
\title{Hardy spaces for quasiregular mappings and composition operators}
\author{Tomasz Adamowicz{\small$^1$},  Mar\'ia\ J.\ Gonz\'alez{\small$^2$}}
\date{}
\begin{document}
	
	\maketitle
	
	\baselineskip=6mm
	\parskip=2.5mm
	\let\thefootnote\relax\footnote {\textit{2010 Mathematics Subject Classification:} 30H10, 30H20, 30C20}
	\let\thefootnote\relax\footnote{\textit{Keywords and phrases:}  Carleson measure,  composition operators, Hardy spaces, quasiregular mappings, quasiconformal symbols}
	
	\begin{abstract}
		We define Hardy spaces $\mathcal{H}^p$ for quasiregular mappings in the plane, and show that for a particular class of these mappings many of the classical properties that hold in the classical setting of analytic mappings still hold. This particular class of quasiregular mappings can be characterised in terms of composition operators when the symbol is quasiconformal. Relations between Carleson measures and Hardy spaces play an important role in the discussion. This program was initiated and developed for Hardy spaces of quasiconformal mappings by Astala and Koskela in 2011 in their paper \textit{$\mathcal{H}^p$-theory for Quasiconformal Mappings}~\cite{AK}.
	\end{abstract}

	\footnotetext{{\small $^1$}\!\!\! T.A. was supported by a grant of National Science Center, Poland (NCN),  UMO-2017/25/B/ST1/01955.} 
	\footnotetext{{\small $^2$} M.J.G. was supported by Plan Propio de la UCA 2019 and Plan Nacional I+D grant no. MTM2017-85666-P, Spain.  
	
	This work was partially supported by the grant 346300 for IMPAN from the Simons Foundation and the matching 2015-2019 Polish MNiSW fund.}

	\section*{Introduction}
	
	Let $\mathbb{D}$ denote the unit disc $\{z\in \mathbb{C};~|z|<1\}$ and  $\mathbb {T}=\partial \mathbb{D}$. Furthermore, for  
	 a given point $\xi\in \mathbb{T}$ and $c>1$, let us denote  the cone with vertex at $\xi\in \mathbb{T}$ as follows
	\[
	\Gamma(\xi) = \{z\in \mathbb{D};~|z-\xi|< c~ (1-|z|)\}.
	\]

An analytic function $f$ in $\mathbb{D}$ is in the Hardy space $\mathcal{H}^p$ for $0<p<\infty $, if
$$
\sup_{0<r<1} ~\frac{1}{2\pi} \int_0^{2\pi}|f (re^{i\theta})|^p ~\ud\theta = \|f\|_{\mathcal{H}^p}^p < \infty.
$$

 The $\mathcal{H}^p$  theory for analytic functions is very well understood (see for instance \cite {G}, \cite{D} or \cite{BM}). It is well known that if  a function  $f\in \mathcal{H}^p, ~0<p<\infty$, then  $f$
 has almost everywhere  non-tangential boundary limit  $f(e^{i\theta})\in L^p (\mathbb {T})$, and 
 $ \frac{1}{2\pi} \int_0^{2\pi}|f (e^{i\theta})|^p  ~\ud\theta= \|f\|_{\mathcal{H}^p}^p $.
 Moreover, the non-tangential maximal function of $f$, defined as $f^*(\xi)= \sup_{z\in \Gamma (\xi)} |f(z)|$ satisfies $f^* \in  L^p (\mathbb {T})$, with 
 $\|f^*\|_{L^p}\leq c ~\|f\|_{\mathcal{H}^p}$.\\
  \indent
  If $p\geq 2$, then by a theorem of Littlewood and Paley \cite{LP} the derivatives of an $\mathcal{H}^p$ function also satisfy that 
  \begin{equation}\label{area}
  \int_{\mathbb{D}}|f'(z)|^p (1-|z|)^{p-1}~\ud m<\infty.
  \end{equation}
  
  The aim of this note is to analyze these results in the quasiregular setting. A mapping $f:\mathbb{D}\rightarrow \mathbb{R}^2$  is called $K$-\textit{quasiregular} for $K\geq 1$, if  $f$ belongs to the Sobolev space $W_{loc}^{1,2}(\mathbb{D}, \R^2)$ and the distortion inequality
  \[
  |Df(z)|^2\leq  K Jf (z)
  \] 
  holds for almost every $z\in \mathbb{D}$.\\
 \indent
 If in addition we require $f$ to be a homeomorphism, then we say that $f$ is $K$-\textit{quasi\-con\-formal}. For comprehensive introductions to the topic and further references we refer, for instance, to~\cite{AIM}, \cite{LV} and~\cite{va}.
 
  The theory of Hardy spaces for quasiconformal mappings  has been developed by Astala and Koskela in their seminal paper \cite{AK}. Though their results hold also in higher dimensions, we will restrict our attention to dimension $2$ only, leaving the discussion of higher dimensional cases to separate further studies.\\
 \indent
  Astala and Koskela show that many of the properties of  analytic $\mathcal{H}^p$ functions are also shared by $\mathcal{H}^p$-quasiconformal maps, in particular the ones described above related to the maximal function and to the existence of  boundary values in $L^p(\mathbb{T})$. \\
 \indent
 On the other hand, since   the local integrability of $|Df|^p$ might fail for  $K$-quasiconformal mappings when  $p\geq 2K/(K-1)$, see for instance Chapter 13 in \cite{AIM}, an analogue of (\ref{area})  is formulated in \cite{AK} as
 \begin{equation}\label{area2}
  \int_{\mathbb{D}} a_f(z)^p (1-|z|)^{p-1}~\ud m<\infty,
 \end{equation}
 where  $a_f (z) = \exp\big(\frac{1}{|B_z|} \int_{B_z} \log Jf^{\frac12}(w)~\ud m \big)$ is  the average derivative function, introduced by Astala and Gehring  in~\cite{AG}. Besides, condition~\eqref{area2} characterizes $\Hp$-quasiconformal mappings with no restriction on $p$ i.e. for all $0<p<\infty$ (see Theorem 5.1 in \cite{AK}).\\
  Note that when $f$ is conformal, $a_f(z)=|f'(z)|$ due to the harmonicity of $\log{|f'|}$, but that is not the case for general analytic functions. If $f$ is quasiregular, then its lack of injectivity leads to difficulties when studying $a_f$.

Analogous to the classical setting, we say that a quasiregular mapping $f\in \mathcal{H}^p_{qr}$ for a given $0<p<\infty$, if
$$
\sup_{0<r<1} ~\frac{1}{2\pi} \int_0^{2\pi}|f (re^{i\theta})|^p ~d\theta= \|f\|_{\mathcal{H}^p}^p < \infty.
$$  
 This class is non-empty, since any bounded quasiregular mapping belongs to $\mathcal{H}^p_{qr}$ for all $0<p<\infty$, because the supremum of the integral means is trivially finite.

 The first part of this note is devoted to provide non-trivial examples of functions in $ \mathcal{H}^p_{qr}$. By the Stoilow factorization theorem, any  quasiregular mapping $f:\D\rightarrow \mathbb{R}^2$ can be written as $f=g\circ \phi$, where $g$ is analytic in $\mathbb{D}$ and $\phi$ is a quasiconformal mapping from $\D$ onto $\D$.
Observe that, since $\phi$ is a quasiconformal selfmapping of $\D$, then $\phi$ has a continuous (even homeomorphic) extension to the boundary $\mathbb{T}$, see the discussion in Chapter 17 in~\cite{va}, in particular~\cite[Theorem 17.8]{va}. In what follows we will denote this extension still by $\phi$. Moreover, $\phi|_{\mathbb{T}}$ is quasisymmetric.

Jerison and Weitsman constructed in~\cite{JW} an example of an analytic function $g\in \mathcal{H}^2$ and a quasiconformal mapping $\phi$ from the disc onto itself such that the quasiregular map $f=g\circ \phi \notin \mathcal{H}^p_{qr}$ for any $p>0$.

In this setting we consider the composition operator  $\mathcal{C}_{\phi}~ g=g\circ \phi$ for $g\in \mathcal{H}^p$, and 
 show that under certain conditions on $\phi$, operator $C_\phi$ sends $\mathcal{H}^p$ to $\mathcal{H}^p_{qr}$.

\begin{theo}\label{thm1} Let  $\phi:\mathbb{D}\rightarrow \mathbb{D}$ be a  quasiconformal mapping and $0<p<\infty$.  Then  $C_\phi: \mathcal{H}^p \rightarrow \mathcal{H}^p_{qr}$ is a bounded operator  if and only if  $\phi^{-1}|_{\mathbb{T}}$ is a Lipschitz function.
\end{theo}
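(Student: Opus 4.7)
The approach is to reformulate the boundedness of $C_\phi$ as a change-of-variable inequality on $\T$, and then extract the Lipschitz property of $\phi^{-1}|_\T$ by testing against suitably concentrated outer functions.

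First, I reduce to a boundary integral. Set $h:=\phi|_\T$; by the discussion preceding the theorem, $h$ is a quasisymmetric homeomorphism of $\T$. Using the Astala--Koskela identification of the Hardy norm with the $L^p$-norm of the nontangential maximal function, combined with the fact that $K$-quasiconformal self-maps of $\D$ distort Stolz cones in a controlled way, one obtains $(g\circ\phi)^*(\xi)\lesssim g^*(h(\xi))$ for a.e.\ $\xi\in\T$; together with the existence of boundary values of $g\circ\phi$ inherited from $g$ via the quasisymmetric extension of $\phi$, this yields
$$
\|g\circ\phi\|_{\Hp_{qr}}^p\;\asymp\;\frac{1}{2\pi}\int_0^{2\pi}|g(h(e^{i\theta}))|^p\,\ud\theta\;=\;\int_\T|g|^p\,\ud(h_*\lambda),
$$
where $\lambda$ is normalized arc length on $\T$ and $h_*\lambda$ denotes its push-forward under $h$. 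Hence $C_\phi$ is bounded if and only if there is a constant $C>0$ such that
$$
\int_\T|g|^p\,\ud(h_*\lambda)\le C\int_\T|g|^p\,\ud\lambda\qquad\text{for every }g\in\Hp.
$$

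For the sufficiency, if $\phi^{-1}|_\T=h^{-1}$ is $L$-Lipschitz then $(h_*\lambda)(E)=\lambda(h^{-1}(E))\le L\,\lambda(E)$ for every Borel set $E\subset\T$, so $\ud(h_*\lambda)/\ud\lambda\le L$ a.e.\ and the displayed inequality holds with constant $L$. For the necessity, I test against outer functions: given an arc $I\subset\T$ and $\epsilon>0$, let $g_\epsilon\in\Hp$ be the bounded outer function with $|g_\epsilon(e^{i\theta})|=\chi_I(e^{i\theta})+\epsilon$ a.e., which is well defined since $\log(\chi_I+\epsilon)\in L^\infty(\T)$. Plugging $g_\epsilon$ into the displayed inequality, computing both sides explicitly, and letting $\epsilon\to 0^+$ isolates $\lambda(h^{-1}(I))\le C\,\lambda(I)$ for every arc $I$, which is precisely the Lipschitz property of $h^{-1}$.

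The main obstacle is the initial reduction: unlike in the classical analytic setting, $|g\circ\phi|^p$ need not be subharmonic, so the usual monotonicity of circular means in $r$ is lost and one cannot directly identify the supremum defining $\|g\circ\phi\|_{\Hp_{qr}}$ with a boundary integral. To close this gap one has to rely on the nontangential maximal function characterization of the Hardy norm for quasiregular mappings (available from the Astala--Koskela framework cited in the Introduction) together with the quasiconformal distortion of approach regions to obtain the two-sided comparison between $(g\circ\phi)^*$ and $g^*\circ h$. Once this step is secured, the remaining arguments—change of variable via $h_*\lambda$ and the outer-function test—are routine.
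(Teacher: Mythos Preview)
Your overall strategy is sound, but the displayed two-sided estimate $\|g\circ\phi\|_{\Hp_{qr}}^p\asymp\int_\T|g|^p\,\ud(h_*\lambda)$ is asserted unconditionally and is not actually available. The Astala--Koskela maximal-function characterization you invoke is proved for \emph{quasiconformal} maps, not for arbitrary quasiregular $g\circ\phi$; and even granting the pointwise cone comparison $(g\circ\phi)^*\lesssim g^*\circ h$, what one gets on the right is $\int_\T|g^*|^p\,\ud(h_*\lambda)$, which need not be comparable to $\int_\T|g|^p\,\ud(h_*\lambda)$ unless one already knows $h_*\lambda$ has bounded density---precisely the conclusion you are after. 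The fix is simply to abandon the symmetric reformulation and argue each implication with the one-sided inequality it actually needs. For necessity, Fatou's lemma alone gives $\int_\T|g\circ h|^p\,\ud\lambda\le\|g\circ\phi\|_{\Hp_{qr}}^p$, after which your outer-function test with $|g_\epsilon|=\chi_I+\epsilon$ goes through verbatim. For sufficiency, keep the maximal function throughout: $\|g\circ\phi\|_{\Hp_{qr}}^p\le\int_\T|(g\circ\phi)^*|^p\,\ud\lambda\lesssim\int_\T|g^*|^p\,\ud(h_*\lambda)\le L\int_\T|g^*|^p\,\ud\lambda\lesssim\|g\|_{\Hp}^p$, using the Lipschitz bound on the density of $h_*\lambda$. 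This last chain is exactly the paper's sufficiency argument.

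The one genuine methodological difference from the paper is your choice of test functions in the necessity direction. The paper tests against the kernels $g_w(z)=(1-\bar w z)^{-2/p}$, applies Fatou, and reads off $|\phi^{-1}(I_w)|\lesssim|I_w|$ from the resulting bound $\int_\T|1-\bar w\phi(z)|^{-2}\,\ud m(z)\lesssim(1-|w|^2)^{-1}$. Your outer-function test with boundary modulus $\chi_I+\epsilon$ is a legitimate alternative: it is arguably cleaner since it isolates the arc $I$ directly and avoids any kernel estimate, while the paper's choice has the merit of staying within a single explicit analytic family and tying the argument to the Carleson-measure language used later (e.g.\ in Theorem~A).
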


The theorem leads to the following class of mappings: For  $0<p<\infty$ we define the family of quasiregular mappings  $\mathcal{F}_p$ as follows
$$
\mathcal{F}_p:=\{f: \mathbb{D}\rightarrow \mathbb{R}^2; ~ f= g\circ \phi, ~ g\in \mathcal{H}^p ~\textit{and} ~ \phi^{-1}|_{\mathbb{T}} \textit{ is a Lipschitz function} \}.
$$               

 Theorem~\ref{thm1}  shows that $\mathcal{F}_p \subset \mathcal{H}^p_{qr}$. On the other hand, it turns out that not all mappings in $\mathcal{H}^p_{qr}$  can be constructed this way. Indeed, an example is provided by any bounded analytic function precomposed with a quasiconformal mapping whose inverse is not Lipschitz. However, a slightly more subtle example can be given. 
 
 \begin{theo}\label{thm2}

There exists a function $f=g\circ \phi \in \mathcal{H}^1_{qr} $ such that  $\phi^{-1}|_{\mathbb{T}} \textit{ is  Lipschitz} $ but $g\notin \mathcal{H}^1$.
\end{theo}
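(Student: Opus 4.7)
The plan is to choose an analytic $g$ that fails $\mathcal{H}^1$ by a logarithmic factor and to build a quasiconformal $\phi$ with $\phi^{-1}|_{\T}$ Lipschitz whose interior behaviour radially ``slows down'' the approach of $\phi(re^{i\theta})$ to the boundary singularity of $g$, so that the divergent $\log\tfrac{1}{1-r}$ rate for the round-circle means is tamed into uniformly bounded integrals.

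Concretely, I would take $g(z)=(1-z)^{-1}$; since $\int_0^{2\pi}|1-re^{i\theta}|^{-1}\,\ud\theta \sim \log\tfrac{1}{1-r}$, we have $g\notin\mathcal{H}^1$. For $\phi$, first fix its boundary values: let $\psi\colon[-\pi,\pi]\to[-\pi,\pi]$ be the increasing homeomorphism equal to the identity for $|\theta|\ge\epsilon_0$ and equal to $\operatorname{sign}(\theta)\sqrt{\epsilon_0|\theta|}$ for $|\theta|<\epsilon_0$. The inverse $\psi^{-1}(\beta)=\beta^2/\epsilon_0$ on $[0,\epsilon_0]$ has derivative at most $2$, so the circle map $e^{i\theta}\mapsto e^{i\psi(\theta)}$ has Lipschitz inverse on $\T$. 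Since $\psi$ is quasisymmetric, it admits a quasiconformal extension $\phi\colon\D\to\D$ (for instance via Beurling--Ahlfors, or by a direct ``power-map'' construction in a cusp near $z=1$ glued smoothly to the identity outside).

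The decisive property required of $\phi$ is the interior lower bound
\[
|1-\phi(z)|\;\geq\;c\,|1-z|^{1/2}\quad\text{for $z\in\D$ near $1$,}
\]
reflecting that $\phi$ compresses the radial direction near $z=1$ at a square-root rate. Granting this, one writes
\[
\int_0^{2\pi}|g(\phi(re^{i\theta}))|\,\ud\theta = \int_0^{2\pi}\frac{\ud\theta}{|1-\phi(re^{i\theta})|}
\]
and splits into the contributions from $|\theta|<\epsilon_0$ (where the interior bound applies) and from $|\theta|\ge\epsilon_0$ (where $|1-\phi(re^{i\theta})|$ is bounded away from zero, as $\phi$ is a homeomorphism of $\overline{\D}$ with $\phi^{-1}(1)=1$). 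The first is majorised by $c^{-1}\int_{|\theta|<\epsilon_0}|1-re^{i\theta}|^{-1/2}\,\ud\theta$, and the standard computation with $|1-re^{i\theta}|^2\asymp(1-r)^2+\theta^2$ shows this is uniformly bounded in $r$. Hence $f=g\circ\phi\in\mathcal{H}^1_{qr}$, while $g\notin\mathcal{H}^1$.

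The principal technical obstacle is the production of $\phi$ with the stated interior square-root bound while preserving the disc and the Lipschitz property of the boundary inverse. A Beurling--Ahlfors calculation yields the bound along the radial ray toward $1$, but the behaviour at other approach angles requires separate verification; alternatively, an explicit cusp construction in local coordinates $(s,\alpha)=(|1-z|,\arg(1-z))$ must be glued to the identity carefully, and the gluing has to be shown quasiconformal and consistent with the self-map condition on $\D$.
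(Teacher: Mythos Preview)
Your choice of $g(z)=(1-z)^{-1}$ and of the boundary map $e^{i\theta}\mapsto e^{i\psi(\theta)}$ with $\psi(\theta)\sim\operatorname{sign}(\theta)\sqrt{|\theta|}$ coincide with the paper's construction. The difference lies entirely in how you verify $f=g\circ\phi\in\mathcal{H}^1_{qr}$, and here your argument is incomplete: you need the interior estimate $|1-\phi(z)|\ge c|1-z|^{1/2}$ for \emph{all} $z\in\D$ near $1$, and you yourself flag this as the principal obstacle. General quasiconformal distortion only gives $|1-\phi(z)|\gtrsim|1-z|^{K}$, so the square-root exponent genuinely requires control of the specific extension, and neither a Beurling--Ahlfors computation nor a glued cusp construction is carried out.

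The paper bypasses this interior analysis altogether. The key device is Lemma~\ref{lem-cones}: for any quasiconformal self-map of $\D$, cones are sent into cones with uniformly controlled aperture. Hence
\[
|g(\phi(r\xi))|\le (g\circ\phi)^*(\xi)\lesssim g^*(\phi(\xi)),
\]
and for this particular $g$ one has $g^*(\xi)\simeq|g(\xi)|=|1-\xi|^{-1}$ on $\T$. The problem therefore reduces to the \emph{boundary} integral $\int_{\T}|1-\phi(\xi)|^{-1}\,\ud m(\xi)$, which after the change of variables $s=\psi(t)$ (so $\ud t\simeq s\,\ud s$) becomes $\int_0^\pi s\,|1-e^{is}|^{-1}\,\ud s<\infty$. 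No information about $\phi$ in the interior beyond the general cones-to-cones property is used; the square-root behaviour enters only through the Jacobian of the boundary change of variables. This is exactly the step that replaces your unproven interior bound.
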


By arguing as in the proof of Theorem~\ref{thm1}, we recover in Theorem~\ref{thmA} (see section on composition operators) a result for a composition operator from Bergman spaces $\mathcal{A}^p$ to the space of all $p$-integrable quasiregular mappings on $\D$, cf. \cite[Theorem 2]{FGW}. 

In the second part of this note, we study the properties of $\mathcal{H}^p_{qr}$. One cannot expect for a general function $f\in\mathcal{H}^p_{qr}$ to have boundary values a.e on $\mathbb{T}$. In fact, although  bounded planar quasiregular mappings have radial limits in a set $E $ of positive Hausdorff dimension,  this set $ E$ can be of arbitrarily small Hausdorff dimension (see pg. 119-120 in~\cite{N}).\\
\indent
On the other hand,  for quasiregular functions in $\mathcal{F}_p $, we show results analogous to the ones mentioned above in the context of the analytic Hardy spaces and the quasiconformal Hardy spaces.

\begin{theo}\label{thm3}
	Let $0<p<\infty$ and $f$ be a $K$-quasiregular mapping in $\mathcal{F}_p$. Then it holds:
	\begin{itemize}
		\item[(i)]
	$f\in \mathcal{H}^p_{qr}$.
		\item [(ii)]
		The boundary values  $f(\xi)$ exist for a.e. $\xi\in \mathbb{T}$  and   $f(\xi)\in L^p(\mathbb{T})$.
			
			\item [(iii)]
			The non-tangential maximal function  $f^*\in L^p (\mathbb{T})$.
			
				\item [(iv)]
				If  $2\leq p<\frac{2K}{K-1}$, then  it holds that 
				$\int_{\mathbb{D}} |Df(z)|^p~(1-|z|)^{p-1}~\ud m<\infty$.
\end{itemize}

\end{theo}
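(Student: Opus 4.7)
Part (i) is a direct application of Theorem~\ref{thm1}: since $g\in\mathcal{H}^p$ and $\phi^{-1}|_{\mathbb{T}}$ is Lipschitz, $f=C_\phi g\in\mathcal{H}^p_{qr}$.

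For (ii) and (iii) the plan is to transfer the corresponding classical statements for $g\in\mathcal{H}^p$ across the quasiconformal symbol $\phi$, using two ingredients. First, as a $K$-quasiconformal self-map of $\mathbb{D}$, $\phi$ is a quasi-isometry in the hyperbolic metric, so the image of any non-tangential cone $\Gamma(\xi)$ under $\phi$ is contained in a non-tangential cone $\Gamma_{c'}(\phi(\xi))$ with enlarged aperture $c'=c'(c,K)$. Second, $L$-Lipschitzness of $\phi^{-1}|_{\mathbb{T}}$ makes $\phi^{-1}$ absolutely continuous on $\mathbb{T}$ with $|(\phi^{-1})'|\leq L$ almost everywhere, so $\phi^{-1}$ sends null sets on $\mathbb{T}$ to null sets.

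With this in hand, for (ii) I let $N\subset\mathbb{T}$ be the null set where $g$ has no non-tangential limit $g^*$; then $\phi^{-1}(N)$ is null, and on its complement the cone containment forces $\phi(z)\to\phi(\xi)$ non-tangentially whenever $z\to\xi$ non-tangentially, hence $f(z)\to g^*(\phi(\xi))$. Changing variables $\eta=\phi(\xi)$ on $\mathbb{T}$ then yields
$$
\int_{\mathbb{T}} |f(\xi)|^p\,d\xi = \int_{\mathbb{T}} |g^*(\eta)|^p\,|(\phi^{-1})'(\eta)|\,d\eta \leq L\,\|g\|_{\mathcal{H}^p}^p.
$$
For (iii) the same cone containment gives $f^*(\xi)\leq g^{**}(\phi(\xi))$, where $g^{**}$ denotes the non-tangential maximal function of $g$ with aperture $c'$; the same change of variables together with the standard aperture-independence of $\|g^{**}\|_{L^p(\mathbb{T})}$ in the classical theory produces $\|f^*\|_{L^p}^p\lesssim\|g\|_{\mathcal{H}^p}^p$.

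The main obstacle is (iv). The quasiregular distortion $|Df|^2\leq K\,Jf = K\,|g'\circ\phi|^2\,J\phi$, together with the change of variables $w=\phi(z)$ and $dm(z)=J\phi(z)^{-1}\,dm(w)$, reduces the desired estimate to
$$
\int_{\mathbb{D}} |g'(w)|^p\,J\phi(\phi^{-1}(w))^{p/2-1}(1-|\phi^{-1}(w)|)^{p-1}\,dm(w) < \infty.
$$
Since $g\in\mathcal{H}^p$ with $p\geq 2$ is controlled by the Littlewood--Paley bound~\eqref{area}, the task is to dominate the pulled-back weight by a constant multiple of $(1-|w|)^{p-1}$. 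Here both hypotheses enter essentially: the restriction $p<2K/(K-1)$ invokes Astala's higher integrability theorem to tame the factor $J\phi^{p/2-1}$, whereas the Lipschitz regularity of $\phi^{-1}|_{\mathbb{T}}$ is the mechanism that controls $1-|\phi^{-1}(w)|$ in terms of $1-|w|$ through a Carleson-box comparison on the boundary. Making these two ingredients cooperate, most naturally after a Whitney decomposition of $\mathbb{D}$ adapted to $\phi$, is the delicate step of the argument and the only one that does not reduce formally to Theorem~\ref{thm1} together with the classical $\mathcal{H}^p$ theory.
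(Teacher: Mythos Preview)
Your treatment of (i)--(iii) matches the paper's essentially line for line: (i) is Theorem~\ref{thm1}, and (ii)--(iii) follow from Lemma~\ref{lem-cones} (cones go to cones) plus the change of variables on $\mathbb{T}$ with $|(\phi^{-1})'|\leq L$.

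For (iv) the paper takes a cleaner route than the one you sketch. Rather than changing variables and then trying to compare the pulled-back weight to $(1-|w|)^{p-1}$, it pushes forward the measure $|\phi'(z)|^p(1-|z|)^{p-1}\,\ud m(z)$ to a measure $\mu$ on $\mathbb{D}$ and invokes Luecking's Carleson-type characterization \cite[Theorem~3.1]{L}: $\int_{\mathbb D}|g'|^p\,\ud\mu\lesssim\|g\|_{\mathcal H^p}^p$ for all $g\in\mathcal H^p$ (with $p\ge 2$) if and only if $\mu(B)\lesssim r_B^{\,p+1}$ for every hyperbolic ball $B$. The verification of this Carleson condition is then a short computation using the circular distortion theorem, the higher-integrability bound $\int_{\tilde B}|\phi'|^p\lesssim (|\phi(\tilde B)|/|\tilde B|)^{p/2}|\tilde B|$ valid for $p<2K/(K-1)$, and finally the Lipschitz hypothesis on $\phi^{-1}|_{\mathbb T}$ to obtain $r_{\tilde B}\lesssim r_B$.

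Your plan, by contrast, aims to dominate $J\phi(\phi^{-1}(w))^{p/2-1}(1-|\phi^{-1}(w)|)^{p-1}$ by $c\,(1-|w|)^{p-1}$ and then quote the Littlewood--Paley inequality~\eqref{area}. As stated this is a pointwise weight comparison, and it is simply false: $J\phi$ is not bounded, only in $L^q_{\mathrm{loc}}$ for $q<K/(K-1)$, so no pointwise domination is available. You acknowledge this implicitly by deferring to a Whitney decomposition; carried out, that decomposition (use subharmonicity of $|g'|^p$ to freeze $g'$ on each image box, apply higher integrability on each source box, then sum with bounded overlap) does work and reproduces exactly the estimate $\mu(B)\lesssim r_B^{\,p+1}$ above. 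In other words, your fallback is correct but amounts to reproving the relevant instance of Luecking's theorem by hand. Citing \cite{L} as the paper does is both shorter and makes transparent why only the Carleson condition on $\mu$ (rather than a pointwise weight bound) is needed.
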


We remark that part (iv) of the theorem may fail if $0<p<2$, see~\cite[Theorem 1]{Gi}.

{\bf Acknowledgements.} Part of the work was conducted during the Simons semester in \emph{Geometry and analysis in function and mapping theory on Euclidean and metric measure spaces} at IMPAN in fall 2019. The authors would like also to thank Pekka Koskela for many valuable conversations. 

\subsection*{Preliminaries}

In this note, the letter $c$ denotes a positive constant that may change at different occurrences. The notation $A\lesssim  B$ ( $A\gtrsim B$)means that there is a constant $c>0$  such that $A \leq c B$ ($A\geq c B$). The notation $A \simeq B$ means that $A \lesssim B \lesssim A$.

By $\ud m$ we denote the appropriate Lebesgue measure: $1$-, $2$-dimensional, depending on the context of the presentation. 

Moreover, if $g$ is a complex function, then its complex derivative is denoted $g'=g_z$. 
%and we use both notations interchangeably, depending on the convenience of notation in a %given argument.
The Jacobian of a map $f$ at a point $w$ is denoted by $Jf(w)$. Note that, if $f$ is holomorphic, then
$Jf(w)=|f'(w)|^2=|f_z(w)|^2$. If $f$ is quasiregular in the plane, then by the distortion inequality it holds that $|Df|\simeq_{c(K)}|f_z|$.

In what follows we often study a quasiconformal selfmapping of $\D$ or an analytic function in $\Hp$ which both have appropriate boundary extensions. We follow a convention to denote these extensions by the same symbols as the given quasiconformal map and the analytic function, respectively.

Finally, let us introduce the notation needed in the proof of Theorem 1.	 For any  $z\in \mathbb{D}$, we denote by $B_z$ the hyperbolic ball $B_z=B(z,c~(1-|z|))$, for some $0<c<1$, and by $I_z$ the interval in $\mathbb{T}$ defined as 
		 \[
		 I_z=\{ e^{i\theta}\in \T: |\arg{ (ze^{-i\theta})}|     \leq \frac{1-|z|}{2}\}.
		 \]
		 Note that $|I_z|\simeq 1-|z|, \textrm{for any}~ z\in \mathbb{D}$.
		 
		Moreover, if $I\subset \mathbb{T}$ is an interval, the Carleson square $S(I)\subset \mathbb{D}$ is the set
		$$
		S(I) = \{re^{i\theta} : e^{i\theta}\in I, 1-\frac{|I|}{2\pi}\leq r<1\},
		$$
		where $|I|$ denotes the length  of the interval $I$.

The following lemma is used in proofs of Theorem~\ref{thm1} and part (iii) of Theorem~\ref{thm3} and stays that an image of a cone under a quasiconformal selfmapping of a unit disc is contained in a cone, whose aperture can be uniformly chosen for all boundary points. This observation is needed in order to apply the non-tangential maximal function on images of cones.
\begin{lem}\label{lem-cones}
 Let $\phi: \D\to\D$ be a $K$-quasiconformal mapping and let $\Gamma(\xi)$ be a cone with a vertex at $\xi\in\T$. Then $\phi(\Gamma(\xi))\subset \Gamma_{\phi(\xi)}$ with the aperture depending only on $K$, the aperture of $\Gamma(\xi)$ and the geometry of $\D$.
\end{lem}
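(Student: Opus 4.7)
The plan is to identify cones with bounded hyperbolic neighbourhoods of radial geodesic rays, use that a $K$-quasiconformal self-map of $\D$ is a coarse quasi-isometry of the hyperbolic disc, and then apply the stability of quasi-geodesics.

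First, for an aperture $c>1$ I would reformulate the cone condition: $z\in\Gamma(\xi)$ is equivalent, up to uniform constants, to $d_{\mathrm{hyp}}(z,\gamma_\xi)\leq M(c)$, where $\gamma_\xi:=\{t\xi:0\leq t<1\}$ is the radial ray and $M(c)$ depends only on $c$. This is a direct Euclidean-to-hyperbolic comparison: writing $z=re^{i\theta}$ and $\xi=e^{i\theta_0}$, the inequality $|z-\xi|<c(1-r)$ yields $r|\theta-\theta_0|\lesssim(1-r)$, which is exactly the statement that $z$ stays in a hyperbolic tube around $\gamma_\xi$.

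Second, since $\phi$ is $K$-quasiconformal on $\D$, it is an $(L,B)$-quasi-isometry of $(\D,d_{\mathrm{hyp}})$ with constants $L,B$ depending only on $K$; see, e.g., \cite{va}. Hence, if $z\in\Gamma(\xi)$, then $\phi(z)$ lies in the hyperbolic $(LM+B)$-neighbourhood of $\phi(\gamma_\xi)$, which is a quasi-geodesic ray from $\phi(0)$ to $\phi(\xi)$ (using the continuous boundary extension of $\phi$ to $\T$). By the Morse stability lemma in the $\delta$-hyperbolic disc, $\phi(\gamma_\xi)$ is within bounded hyperbolic Hausdorff distance $D=D(K)$ of the hyperbolic geodesic joining $\phi(0)$ and $\phi(\xi)$, and that geodesic is asymptotic at $\phi(\xi)$ to the radial ray $\gamma_{\phi(\xi)}$.

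Combining these bounds, $\phi(z)$ lies in a uniformly bounded hyperbolic neighbourhood of $\gamma_{\phi(\xi)}$, and the converse direction of the first step yields $\phi(z)\in\Gamma_{\phi(\xi)}$ with aperture depending only on $K$, the original aperture $c$, and the ambient geometry of $\D$. The main difficulty is the last step: the image quasi-geodesic starts at $\phi(0)$ rather than at $0$, so the two competing geodesic rays share only their boundary endpoint $\phi(\xi)$; absorbing this discrepancy into the aperture of the target cone is precisely what the Morse stability lemma, together with the asymptotic convergence at a common boundary point of hyperbolic rays in $\D$, accomplishes.
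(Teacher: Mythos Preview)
Your hyperbolic-geometry approach is a genuinely different route from the paper's, which passes to the upper half-plane and applies the Ahlfors three-point condition for quasicircles to the boundary rays $L_\pm$ of the cone together with a boundary half-line. Your first two steps are fine: the Stolz region $\Gamma(\xi)$ is comparable to a bounded hyperbolic tube around the radius $\gamma_\xi=[0,\xi)$, and a $K$-quasiconformal self-map of $\D$ is an $(L,B)$-quasi-isometry of the hyperbolic metric with $L,B$ depending only on $K$.

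The gap is precisely in the step you flag as the ``main difficulty'', and the fix you propose does not close it. Morse stability controls the Hausdorff distance between $\phi(\gamma_\xi)$ and the geodesic $g$ from $\phi(0)$ to $\phi(\xi)$; asymptotic convergence of rays with a common ideal endpoint only says that $g$ and $\gamma_{\phi(\xi)}$ become close \emph{near} $\phi(\xi)$. Near the initial point the discrepancy is of order $d_{\mathrm{hyp}}(0,\phi(0))$, and nothing in your argument bounds this in terms of $K$ alone. Concretely, for the M\"obius map $\phi(z)=(z+r)/(1+rz)$ and $\xi=-1$ one has $\phi(\xi)=-1$ and $\phi(0)=r\in\phi(\Gamma_c(-1))$, while $r\in\Gamma_{c'}(-1)$ forces $c'>(1+r)/(1-r)\to\infty$ as $r\to1$; so the aperture your argument yields genuinely depends on $|\phi(0)|$, not just on $K$ and $c$. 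The paper's half-plane cone $\{|z|<c\,\mathrm{Im}\,z\}$ is a tube around the \emph{full} imaginary axis, a bi-infinite geodesic with both endpoints on the ideal boundary, not around a segment issued from an interior basepoint; that is what allows the three-point quasicircle bound to depend only on $K$ and $c$. If you want to repair the hyperbolic route, replace the radius $[0,\xi)$ by the diameter $(-\xi,\xi)$ and compare its $\phi$-image, via the Morse lemma, with the geodesic joining the two boundary points $\phi(-\xi)$ and $\phi(\xi)$.
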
	
In the statement above we slightly abuse the term aperture, as cones in our manuscript do not necessarily have the shape of geometric cones.
\begin{proof}

To simplify the notation and the argument, let us replace the unit  disc by the upper half plane. Consider then a cone in the upper half plane $\mathbb{R}^2_+$, with the vertex at the origin:
 $$
 \Gamma=\{z=x+iy\in \mathbb{R}^2_+:~|z|< c~ y\}
 $$
 for a fixed $c>1$, and let $\phi$ be a $K$-quasiconformal map from $\mathbb{R}^2_+$ onto itself with $\phi(0)=0$. 

Let us show that the image of the line $L_+=\{z\in \mathbb{R}^2_+:~|z|= c~ y,~ x>0\}$ is contained in a cone with the vertex at $0$. For that, note that the curve $\gamma=L_+\cup \mathbb{R}_+$ is a quasicircle, and so is its image $\phi(\gamma)= \phi(L_+)\cup \mathbb{R}_+ $.

Let $z\in L_+$ and  $w=\phi(z)$. Denote by $x_0$ the closest point on the real line to $w$, that is if $w=u+iv$, then $v= |w-x_0|$.
 Since the origin lies on the arc of $\phi(\gamma)$ joining $w$ and $x_0$, it holds by the 3-point condition characterizing quasicircles that 
 $$
 |w|\leq |w|+ |x_0| \leq C ~|w-x_0|
 $$
that is $|w| \leq C~v$, where $ C=C(K)$. 

A similar argument shows that the image of the line $L_{-}=\{z\in \mathbb{R}^2_+:~|z|= c~ y,~ x<0\}$ is  contained in the same cone.
\end{proof}
	
\section*{Composition operators with quasiconformal symbol}
		
		The main goal of this section is to prove Theorem~\ref{thm1}, also to show the boundedness of composition operator $C_{\phi}$ on Bergman spaces,  see Theorem~\ref{thmA} below.

		\begin{proof}[Proof of Theorem 1]
			
			Assume that $C_\phi: \mathcal{H}^p \rightarrow \mathcal{H}^p_{qr}$ is a bounded operator, that is for any $g\in  \mathcal{H}^p$, 
			\begin{equation}\label{bound1}
		\sup_{r<1} \int_{\mathbb{T}}|g(\phi(rz))|^p~\ud m \leq c \int_{\mathbb{T}}|g(z)|^p~\ud m.
			\end{equation}

			Set $g(z)= \frac{1}{(1-\bar{w}z)^{2/p}}$, with $w\in\mathbb{D}$. Then, $g\in \mathcal{H}^p$, and  $\|g\|^p_{\mathcal{H}^p}\simeq\frac{1}{1-|w|^2}$.

			Next, by Fatou's lemma and~\eqref{bound1}
				\begin{equation}\label{bound}
				\begin{split}	
			\int_{\mathbb{T}} \frac{1}{|1-\bar{w}\phi(z)|^2}~\ud m(z)&\leq \liminf_{r\rightarrow1} \int_{\mathbb{T}} \frac{1}{|1-\bar{w}\phi(rz)|^2}~\ud m(z) \\
			&\leq \sup_{r<1}\int_{\mathbb{T}} \frac{1}{|1-\bar{w}\phi(rz)|^2}~\ud m(z)
			 \leq c~ \frac{1}{1-|w|^2}.
			 \end{split}
			\end{equation}

			Define the measure $\mu$ on ${\mathbb{T}}$ as the pushforward of the Lebesgue measure on $\mathbb{T}$ via $\phi$, that is let $\mu (E):=m (\phi^{-1}(E))$, for any set $ E\subset \mathbb{T}$. Then, by the change of variables formula, \eqref{bound} can be expressed in terms of $\mu$  as follows
			\begin{equation}\label{mu}
			\int_{\mathbb{T}} \frac{1}{|1-\bar{w}z|^2}~\ud\mu(z) \leq c~ \frac{1}{1-|w|^2}.
			\end{equation}
		
			If $z\in I_w$, then it holds that  
			\begin{equation}\label{thm1-ineq-aux}
			|1-\bar{w}z| \lesssim 1-|w| \simeq |I_w|.
			\end{equation}
			
			 Therefore, by \eqref{mu} and~\eqref{thm1-ineq-aux}  we conclude that $\mu(I_w)\leq c ~|I_w|$. Hence, by the definition of $\mu$
			$$
			|\phi^{-1}(I)| \leq c~|I|\quad \textrm{for any interval}~I\subset \mathbb{T},
			$$
			as any $I=I_w$ for some $w\in\mathbb{D}$. This shows that function $\phi^{-1}$ is Lipschitz.
			
			In order to prove the converse assertion, recall Lemma~\ref{lem-cones} and note that  if $\phi^{-1}$ is Lipschitz, then by a change of variables
			\begin{equation*}
			\begin{split}
				\sup_{r<1} &\int_{\mathbb{T}}|g(\phi(rz))|^p~\ud m(z) \leq  \int_{\mathbb{T}}|(g\circ \phi)^*(z)|^p~\ud m(z) 
				\lesssim  \int_{\mathbb{T}}|g^*(\phi(z))|^p~\ud m(z)\\ &
				 	 	= \int_{\mathbb{T}}|g^* (w)|^p~|(\phi^{-1})' (w)|~\ud m(w)
				 	 	\leq c
				 	 	\int_{\mathbb{T}}|g^* (w)|^p ~~\ud m(w)\leq c ~\|g\|_{\mathcal{H}^p}^p.
				 	 	\end{split}
				\end{equation*}
		\end{proof}
		
		Using similar arguments, we can  recover the following result (see~\cite[Theorem 2]{FGW} for $p=2$ and Remark 1 on pg. 10 in~\cite{FGW} for all $0<p<\infty$). Recall that the Bergman space, denoted $\mathcal{A}^p$, consists of all holomorphic functions $p$-integrable with respect to the Lebesgue measure on $\D$. Furthermore, by the analogy to space $\Hp_{qr}$ we denote by  $\mathcal{A}^p_{qr}$ the space of all $p$-integrable quasiregular mappings on $\D$.
				
		\begin{theorem}[FGW]\label{thmA}
			The operator $ C_\phi$  is a bounded operator from $\mathcal{A}^p$ to $\mathcal{A}^p_{qr }$ for $0<p<\infty$, if and only if  $\phi^{-1}|_\mathbb{T}$ is Lipschitz.
		
		\end{theorem}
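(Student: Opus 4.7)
The plan is to follow the same strategy as in the proof of Theorem~\ref{thm1}, with the boundary integration on $\mathbb{T}$ replaced by the area integration on $\mathbb{D}$ appropriate for Bergman spaces. For necessity, I would probe the composition operator with reproducing-kernel-type test functions $g_w(z)=(1-\bar w z)^{-3/p}$, which lie in $\mathcal{A}^p$ and satisfy $\|g_w\|_{\mathcal{A}^p}^p\simeq(1-|w|^2)^{-1}$ by the classical evaluation of $\int_{\mathbb{D}}|1-\bar w z|^{-3}\,\ud m(z)$. Applying the boundedness of $C_\phi$ to $g_w$ yields
\[
\int_{\mathbb{D}}\frac{\ud m(z)}{|1-\bar w \phi(z)|^{3}}\leq \frac{c}{1-|w|^{2}},
\]
and restricting this integral to $\phi^{-1}(S(I_w))$, where $|1-\bar w \zeta|\lesssim 1-|w|$, gives the area estimate $m(\phi^{-1}(S(I_w)))\lesssim(1-|w|)^{2}\simeq|I_w|^{2}$.

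Next I would translate this into a boundary estimate on $\phi^{-1}$. Because $\phi^{-1}$ is a quasiconformal self-map of $\mathbb{D}$, it is a quasi-isometry of the hyperbolic metric, so it maps a Carleson square $S(I)$ to a region sandwiched between two Carleson-like regions built over the arc $\phi^{-1}(I)$; in particular, $m(\phi^{-1}(S(I_w)))\simeq|\phi^{-1}(I_w)|^{2}$. Combining with the previous inequality produces $|\phi^{-1}(I_w)|\lesssim|I_w|$, and since every arc of $\mathbb{T}$ is comparable to an $I_w$, this is the desired Lipschitz condition on $\phi^{-1}|_{\mathbb{T}}$. For the converse, assume the Lipschitz condition with constant $L$ and change variables $w=\phi(z)$ to rewrite
\[
\|g\circ\phi\|_{\mathcal{A}^p_{qr}}^{p}=\int_{\mathbb{D}}|g(w)|^{p}\,J\phi^{-1}(w)\,\ud m(w).
\]
The problem then reduces to showing that the measure $\ud\mu=J\phi^{-1}\,\ud m$ is an $\mathcal{A}^p$-Carleson measure, which by the classical Hastings--Luecking embedding theorem is equivalent to $\mu(S(I))\lesssim|I|^{2}$ for every arc $I$; since $\mu(S(I))=m(\phi^{-1}(S(I)))\simeq|\phi^{-1}(I)|^{2}\leq L^{2}|I|^{2}$, the proof concludes.

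The main obstacle is the geometric comparison $m(\phi^{-1}(S(I)))\simeq|\phi^{-1}(I)|^{2}$, used in both directions. While it follows from the hyperbolic quasi-isometry property of quasiconformal self-maps of $\mathbb{D}$ combined with the quasisymmetric behaviour of the boundary trace, a self-contained verification in the spirit of Lemma~\ref{lem-cones} may be preferable: using the three-point/quasicircle characterization one shows that $\phi^{-1}$ sends the ``top'' of $S(I)$ into a Carleson square built over an arc comparable to $\phi^{-1}(I)$ and, conversely, that such a square is contained in $\phi^{-1}(S(I))$, with all constants depending only on $K$.
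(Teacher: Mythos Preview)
Your proposal is correct and rests on the same two pillars as the paper's proof: the Bergman--Carleson embedding characterization and the geometric comparison $m(\phi^{-1}(S(I)))\simeq|\phi^{-1}(I)|^{2}$. The execution differs slightly. The paper invokes Luecking's theorem as a \emph{two-sided} characterization from the start, so that boundedness of $C_\phi$ is immediately equivalent to the Carleson condition $|\phi^{-1}(B)|\lesssim|B|$ on the pushforward measure, and both directions of the theorem then reduce to the single equivalence ``$|\phi^{-1}(B)|\lesssim|B|$ for every hyperbolic ball $B$ $\iff$ $\phi^{-1}|_{\mathbb T}$ is Lipschitz.'' You instead follow the template of Theorem~\ref{thm1} and establish necessity by hand with the test functions $g_w(z)=(1-\bar w z)^{-3/p}$; this works, but it re-derives one half of the Luecking characterization rather than citing it. For the geometric step you flag as the ``main obstacle,'' the paper appeals directly to the circular distortion theorem (see~\eqref{circ-dist} and~\cite[Lemma~2.1]{AK}): $\phi^{-1}(B)$ is trapped between two hyperbolic balls of comparable radii, which gives $|\phi^{-1}(B)|\simeq(1-|z|)^{2}$ with $z=\phi^{-1}(\text{center of }B)$ and hence the desired equivalence with the boundary Lipschitz condition. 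This is equivalent to, and somewhat cleaner than, the hyperbolic quasi-isometry/three-point argument you sketch.
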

	
	\begin{proof}
		Recall that Carleson measures for Bergman spaces  are given by the condition 
		\begin{equation}\label{thmA-Carleson}
		\mu(B) \leq c ~|B|, \hbox{ for any hyperbolic ball }B\subset \D.
		\end{equation}
		Namely, Theorem 2.2 in~\cite{L} applied with $n=0$, $\alpha=0$ and $p=q>0$ stays that $\|g\|_{L^p(\ud\mu)}\leq C \|g\|_{\mathcal{A}^p}$ if and only if $\mu(B(z, r)) <Cr^2$, for all $z \in \D$ and $r=\frac12 \dist(z, \partial \D)$. This can be shown to be equivalent to a condition $\mu(S(I))<c |I|^2$, for any interval $I\subset \T$ and the corresponding Carleson square $S(I)$. 
		
		The condition $C_\phi$ bounded means
		\begin{equation}\label{thmA-cond}
		\int_{\mathbb{D}}|g(\phi(z))|^p~\ud m \leq c~\|g\|_{\mathcal{A}^p}^p.
		\end{equation}
		Let $\mu$  denote the pushforward measure of the Lebesgue measure on $\mathbb{D}$ under mapping $\phi$. Then, by the change of variables formula, the boundedness condition~\eqref{thmA-cond} can be equivalently written as 
		$$
		\int_{\mathbb{D}}|g(z)|^p~\ud\mu \leq c~\|g\|_{\mathcal{A}^p}^p
		$$
		which in turn, by~\eqref{thmA-Carleson},  holds if and only if 
		\begin{equation}\label{thmA-Carlseon2}
		|\phi^{-1}(B)| \leq c ~|B| \hbox{ for any hyperbolic ball }B\subset \D.
		\end{equation}
		 As we will show next, this  is equivalent to saying that $\phi^{-1}|_\mathbb{T}$ is Lipschitz.
		
Denote by $w$ the center of the ball $B$, and let $\phi(z)=w$. Recall the following statement of the so-called circular distortion theorem for $K$-quasiconformal mappings specialized to our case of $\phi:\D\to\D$ (see e.g.~\cite[Lemma 2.1]{AK}): there exists a constant $C=C(K)$ so that for all $z\in \D$
			\begin{equation}\label{circ-dist}
			 \diam \phi^{-1}(B)\simeq 1-|z|\quad\hbox { and }\quad \phi^{-1}(B)\supset B(z, 1/C (1-|z|)).
			\end{equation}
		Therefore, $ |\phi^{-1}(B)|\simeq (1-|z|)^2$, and so condition~\eqref{thmA-Carlseon2} above reads $1-|z| \leq c~(1-|w|)$.

		Let $J$ be any interval on $\T$,  $w_0\in \mathbb{D}$ be the point such that $I_{w_0}=J$, and  $z_0= \phi^{-1}(w_0)$. Then, by the circular distortion theorem,  the interval
		$I = \phi^{-1}(J)$ contains and is contained in bounded multiples of $I_{z_0}$. Since $|I_{w_0}|\simeq 1-|w_0|$ and similarly for $|I_{z_0}|$, we can conlude that 
		$$
		|I| \simeq 1-|z_0|\leq c~(1-|w_0|)  \simeq |J|
		$$
		that is,  $\phi^{-1}|_\mathbb{T}$ is Lipschitz. 
		
		In order to show the opposite implication, notice that by assuming the Lipschitz regularity of $\phi^{-1}|_\mathbb{T}$, we may reverse the presented reasoning and obtain~\eqref{thmA-Carlseon2}.
			\end{proof}
		
	\section*{Quasiregular mappings $\mathcal{H}^p_{qr}$}
		 
		The first question we would like to address is whether any function in  $\mathcal{H}^p_{qr}$ belongs to the family $\mathcal{F}_p$, see also the discussion before the statement of Theorem~\ref{thm2} in Introduction.
		
		\begin{proof}[Proof of Theorem~\ref{thm2}] Let us consider the following homeomorphism  $\phi:[-\pi, \pi] \rightarrow [-\pi, \pi]$, defined as
		\begin{equation*}
			\alpha (t) = \left\{
			\begin{array}{ll}
				\sqrt{\pi t}     &~~ 0\leq t \leq \pi \\
				-\sqrt{-\pi t} & -\pi \leq t \leq 0 \\
					\end{array}
			\right.
		\end{equation*}
		
		It is easy to check that the function $\phi(t)=e^{i\alpha(t)}$ is  quasisymmetric  on $\mathbb{T}$. 
	Therefore, $\phi$ can be extended to a quasisymmetric mapping on the closure of $\mathbb{D}$, which will be denoted by $\phi$ as well.
		
		Next, set $g(z)=\frac{1}{1-z}$. Then $g$ is analytic on ~$\mathbb{D}$  and, since $\int_{0}^{2\pi} \ud \theta/|1-e^{i\theta}|$ diverges,   $g \notin \mathcal{H}^1$.  On the other hand, $f=g\circ \phi \in \mathcal{H}^1_{qr}$ by the following reasoning.
		
	First, recall that by Lemma~\ref{lem-cones} cones are mapped to cones and hence $|g(\phi(r\xi))|\leq g^*(\phi(\xi))$. Let us define $g(\xi)=\frac{1}{1-\xi}$ for $\xi\in \T$ with convention that $g(1):=\infty$. Then it holds that $g^*(\xi) \simeq |g(\xi)|$ for any $\xi\in \T$, because if $z\in \Gamma_\xi$ then $ |1-\xi| \lesssim |1-z|$. Therefore,
	$$
\int_{\mathbb{T}} |f^*|~\ud m \lesssim \int_{\mathbb{T}} |g^*\circ \phi|~\ud m=2
\int_0^\pi|g^* (e^{i \alpha(t)})|~\ud t.
$$
We apply the change of variables $s=\alpha(t)$ and get $s~\ud s=\frac{\pi}{2}~\ud t$ for $0<t<\pi$. Thus,
\begin{align*}
\int_0^\pi|g^* (e^{i \alpha(t)})|~\ud t & \simeq \int_0^\pi |g^*(e^{is})| s~\ud s\simeq
\int_0^\pi \frac{s}{|1-e^{is}|}~\ud s \\
&\lesssim \int_0^{\pi/2}\frac{s}{\sin{s}}~\ud s+  \int_{\pi/2}^{\pi}\frac{1}{|1-e^{is}|}~\ud s<\infty.
\end{align*}
%since the second integral is obvioulsy finite and the first one is bounded by
%$\int_0^{\pi/2} \frac{s}{\sin{s}}~\ud s$ which is also finite.
%
%		
%		\begin{align*}
%	\int _{\mathbb{T}} |f| ~\ud m =\int_{-\pi}^{\pi} \frac{1}{|1- e^{i \alpha(t)}|}~\ud t 
%	&=2\int_{0}^{\pi} \frac{1}{\sqrt{(1-\cos\sqrt{\pi t})^2+\sin^2\sqrt{\pi t}}}~\ud t \\
%	&\lesssim \int_0^ \pi 
%		\frac{1}{\sin {\sqrt{\pi t}}}~\ud t \simeq \int_0^\pi \frac{1}{t^{1/2}}~\ud t < \infty.
%		\end{align*}
\end{proof}
		
   Finally, we present the proof of Theorem~\ref{thm3}.
	\begin{proof}[Proof of Theorem 3]
		\begin{enumerate}
			\item [(i)]  It is a consequence of Theorem 1.
		\item [(ii)]
		 Let $f=g\circ\phi$. Since $g\in \mathcal{H}^p$, the boundary values $g(e^{i\theta})$ exist for every $\theta\in \mathbb{T}\backslash E$ where $E$ is a set of measure $0$. Recall further, that $\phi$ has a homeomorphic extension to $\overline{\D}$. Therefore, $f$ has boundary values on  $\mathbb{T}\backslash \phi^{-1} (E)$. Since $\phi^{-1}|_\mathbb{T}$ is Lipschitz regular, we conclude that  $| \phi^{-1} (E)|=0$ and, hence, $f$ has boundary values a.e. on $\mathbb{T}$.\\
		\noindent
Furthermore, a change of variables allows us to conclude the integrability of $f$:
		\begin{equation*}	
		\int_{\mathbb{T}} |f|^p~\ud m = \int_{\mathbb{T}} |g\circ \phi|^p~\ud m = \int_{\mathbb{T}} |g|^p ~|(\phi^{-1})'|~dm\leq c ~ \|g\|_{\mathcal{H}_p}^p. 
		\end{equation*}
		
		\item[(iii)] Since, by Lemma~\ref{lem-cones},  the image of a cone $\Gamma(z)$ at a point $z\in \mathbb{T}$ is contained in a cone at $\phi(z)$, we have
		$$
		\int_{\mathbb{T}} |f^*|^p~dm \lesssim \int_{\mathbb{T}} |g^*\circ \phi|^p~dm=
		\int_{\mathbb{T}} |g^*|^p ~|(\phi^{-1})'|~dm\leq c ~ \|g\|_{\mathcal{H}_p}^p. 
		$$
	
		\item [(iv)] 
		We start by recalling Theorem 3.1 in \cite{L}, here stated for $p=q\geq 2$ and $n=1$.
		
		\emph{
		If $g\in \mathcal{H}^p$ and $\mu$ is a positive measure in $\mathbb{D}$, then
		$$
		\int_{\mathbb{D}} |g'|^p ~d\mu \leq c \|g\|_{\Hp}^p %\quad p\geq 2
		$$
		if and only if $\mu(S)\leq c ~(l(S))^{1+p}$, for any Carleson square $S$.}\\
		It is easy to show that the condition on the measure can be replaced by  
		\[
		\mu(B)\leq c~r_B^{1+p},
		\]
		 for any hyperbolic ball $B$ of radius $r_B$. This occurs when the exponent is strictly bigger than $1$.
		
			Upon defining a measure $\mu$ as the pushforward of the measure $|\phi'(z)| ^p~(1-|z|)^{p-1} ~\ud m(z)$, we may write
		\begin{align*}
		\int_{\mathbb{D}} |Df(z)|^p ~(1-|z|)^{p-1}~\ud m(z)&\lesssim \int_{\mathbb{D}} |g'(\phi(z))|^p ~|\phi'(z)| ^p  ~(1-|z|)^{p-1}  ~\ud m(z)\\
	&	=\int_{\mathbb{D}} |g'(z)|^p~\ud\mu(z).
		\end{align*}
		Here we also use that for a $K$-quasiconformal mapping $\phi$, it holds that $|D\phi|\leq c(K) |\phi'|$. Therefore, the assertion (iv) will be proven if we show that for any hyperbolic ball B of radius $r_B$ 
		$$
		\mu(B)=\int_{\phi^{-1}(B)} |\phi'(z)|^p  ~(1-|z|)^{p-1}~\ud m(z)\leq c~r_B^{1+p}.
		$$
		
		By the circular distortion theorem, we have that set $\mathbb\phi^{-1}(B)\subset \tilde{B}$ for some hyperbolic ball $\tilde{B}$, and  $\phi(\tilde{B}) $ is contained in another hyperbolic ball centered at the same point as $B$ and radius comparable to $r_B$ (see~\cite[Lemma 2.1]{AK}, also the discussion at~\eqref{circ-dist} above applied for $\phi$ and $\phi^{-1}$). In particular, $|\phi(\tilde{B})|\leq |c B|\simeq r_B^2$.
		
		 Next, observe that for $z\in \tilde{B}=\tilde{B}(z_0,r_{\tilde{B}})$ it holds that
		 $r_{\tilde{B}}\simeq1-|z|$. This combined with estimates on the integrability of the derivatives of a $K$-quasiconformal map (see \cite[Corollary 13.2.4]{AIM}) when $p< 2K/(K-1)$ gives us that
		\begin{align*}
		\mu(B) &\leq \int_{\tilde{B}} |\phi'(z)|^p  ~(1-|z|)^{p-1}~\ud m(z) \\
		 & \simeq r_{\tilde{B}}^{p-1}\int_{\tilde{B}}|\phi'(z)|^p ~\ud m(z) \\
		 & \leq c(K, p)~r_{\tilde{B}}^{p-1}~\left(\frac{|\phi(\tilde{B})|}{|\tilde{B}|}\right)^{\frac{p}{2}}|\tilde {B}| 
		 \lesssim r_{\tilde{B}}^{p-1}\left(\frac{r_B^2}{r_{\tilde{B}}^2}\right)^{\frac{p}{2}} r_{\tilde{B}}^2
		\lesssim  r_{\tilde{B}}~r_B^p.
		\end{align*}
		Finally, by reasoning  as in the proof of Theorem A, the fact that $\phi^{-1}|_\mathbb{T}$ is Lipschitz implies that 	$ r_{\tilde{B}}\lesssim r_B$, see the discussion at~\eqref{thmA-Carlseon2} and~\eqref{circ-dist}. Hence, we conclude that $\mu (B)\leq r_B^{p+1}$, as we wanted to show. This completes the proof of assertion (iv) of the theorem.	
	
	\end{enumerate}	
	\end{proof}

\textit{Tomasz Adamowicz:} Institute of Mathematics, Polish Academy of Sciences, ul. \'Sniadeckich 8, Warsaw 00-656, Poland. E-mail address: tadamowi@impan.pl

\textit{Mar\'ia J. Gonz\'alez:} Departamento de Matem\'aticas, Universidad de C\'adiz, 11510 Puerto Real (C\'adiz), Spain. E-mail address: majose.gonzalez@uca.es

\end{document}